\newcommand{\vol}{{\rm vol}}
\newcommand{\B}{\mathcal{B}}  
\newcommand{\F}{\mathcal{F}}
\newcommand{\N}{\mathbb{N}}
\newcommand{\R}{\mathbb{R}}
\newcommand{\abs}[1]{\left\vert #1 \right\vert}	% absolute value / norm
\newcommand{\scalar}[2]{\langle #1 , #2 \rangle}
\newcommand{\set}[1]{\lbrace #1 \rbrace}
\renewcommand{\P}{\mathbb{P}}
\renewcommand{\a}{\alpha}
\renewcommand{\O}{\Omega}
\newcommand{\dint}{\text{\rm d}}
\newcommand{\gap}{{\rm {gap}}}
\newcommand{\spec}{{\rm {spec}}}
\begin{document}

\section{Introduction}

In many applications, for example volume computation 
\cite{KaLoSi97,LoSi93,LoVe06-2} or integration of functions 
\cite{LoVe06-1,MaNo07,Ru09,Ru12}, 
it is essential that one can
approximately sample a distribution in a convex body. 
The dimension $d$ might be very large.
One approach that is feasible for a general 
class of problems 
is to run a Markov chain 
that has the desired 
distribution as its limit distribution. 
In the following let us explain 
why the positivity of
the Markov operator is helpful to prove efficiency results 
for such sampling procedures.

We assume that we have a Markov chain in $K\subset \R^d$ which is reversible 
with respect to (w.r.t.) the distribution $\pi$.
Let $P\colon L_2(\pi)\to L_2(\pi)$ be the corresponding Markov operator and let
$L_2(\pi)$ be all (w.r.t. $\pi$) square integrable functions 
$f\colon K \to \R$. 
We assume that $P$ is \emph{ergodic}, which means that 
$Pf=f$ implies that $f$ is constant. 
Then let $\gap(P)=1-\beta$ be the \emph{absolute spectral gap}, 
where $\beta$ denotes 
the largest absolute value of the elements of the spectrum of $P$ without $1$.
In formulas $\beta = \sup \set{ \abs{\a}\colon \a \in \spec(P)\setminus 1 }$,
where $\spec(P)$ denotes the spectrum of $P$.
For example a lower bound for $\gap(P)$ implies an upper bound 
of the total variation distance \cite{LoSi93}
and 
on the mean square error of Markov chain Monte Carlo algorithms
for the approximation of expectations with respect to $\pi$, 
 see e.g.~\cite{Ru12}.

Maybe the most successful technique to bound $\gap(P)$ is the 
conductance technique \cite{LaSo88,LoSi93}. 
But, unfortunately, 
bounds on the conductance allow only bounds on the 
second largest element of the spectrum of the Markov operator.
This is known as Cheeger's inequality \cite{LaSo88}.
To handle variation distance and absolute spectral gap it is necessary 
to consider also the smallest element of the spectrum, 
which describes some kind of periodicity of Markov chains.
Usually, this problem is avoided by considering the \emph{lazy version}
of a Markov chain. That is, in each step, the Markov 
chain remains at the current state with probability $1/2$.
Such a lazy version induces a Markov operator with non-negative spectrum, which 
implies that the smallest element of the spectrum does not matter.
This strategy has almost no influence on the computational 
cost, since, compared to the overall cost of one step of the chain, 
one additional random number is mostly negligible. 
However, it is desirable to omit any slowdown whenever possible.

In particular, the best known bounds on the total variation distance
of the hit-and-run algorithm, see \cite{Lo99,LoVe06-1,LoVe06,LoVe07},
% , which is the main motivation of this paper, 
rely on the conductance and Corollary~1.5 resp.~Corollary~1.6 of \cite{LoSi93}.
These corollaries give upper bounds on the total variation distance 
in terms of the conductance resp.~$s$-conductance, but 
it has to be assumed that the corresponding Markov operator is positive, 
cf.~Section~1.B of \cite{LoSi93}.
%although it is not explicitly stated in the assumptions of the corollaries. 
(More precisely, the assumption that the smallest element of the spectrum 
is smaller in absolute value than the second largest one is sufficient.)
Thus, there is a small gap in the proofs of \cite{LoVe06-1} and \cite{LoVe06},
which might be easily fixed by considering the lazy version of the hit-and-run algorithm.
We prove, among others, that hit-and-run is positive. 
Thereby, we close the small gap and show in addition that 
the results of \cite{Lo99} and \cite{LoVe07} hold also for the 
non-lazy hit-and-run algorithm 
as originally proposed in \cite{Sm84}.

The technique that we will use to prove that the spectrum 
of a Markov operator is positive is based on a simple and 
well known lemma from functional analysis. 
This was already successfully applied in a discrete setting 
to prove positivity (and comparison results) for the 
Swendsen-Wang process from statistical physics, see~\cite{Ul12b,U-phd}.
Here, we show that the hit-and-run algorithm, 
random scan Gibbs sampler, slice sampler and the 
Metropolis algorithm with positive proposal are positive. 
In particular, it implies that the independent Metropolis algorithm is positive.
The result is new for the hit-and-run algorithm and the Metropolis algorithm 
with positive proposal, whereas for the random scan Gibbs sampler and the 
slice sampler it is known \cite{LiWoKo95,MiTi02}.
%In the following 
%we present a technique to prove that the spectrum 
%of a Markov operator is positive, which 
%is based on a simple and well known 
%lemma from functional analysis. 
%This technique was already successfully applied in a discrete setting 
%to prove positivity (and comparison results) for the 
%Swendsen-Wang process from statistical physics, see~\cite{Ul12b,U-phd}.
%Here, we show that the hit-and-run algorithm, 
%random scan Gibbs sampler, slice sampler and the 
%Metropolis algorithm with positive proposal are positive. 
%In particular, it implies that the independent Metropolis algorithm is positive.
%The result is new for the hit-and-run algorithm and the Metropolis algorithm with positive proposal,
%whereas for the random scan Gibbs sampler and the slice sampler it is known \cite{LiWoKo95,MiTi02}.

\section{The procedure} \label{sec:prelim}

We consider a time-homogeneous
Markov chain $(X_i)_{i\in\N}$, 
where the $X_i$ 
are random variables on a 
common probability space $(\O,\F,\P)$ that map 
into $\R^d$, equipped with the Borel $\sigma$-algebra $\B$,  
and satisfy the Markov property. 
Namely,
\[
\P(X_n\in A_n \mid X_{n-1}\in A_{n-1},...,X_0\in A_0)
\,=\, \P(X_n\in A_n \mid X_{n-1}\in A_{n-1})
\]
for all $n\ge1$ and any sequence of 
$\B$-measurable sets $A_0,A_1,\dots$ 
with the property
$\P(X_{n-1}\in A_{n-1},...,X_0\in A_0)>0$. 
We assume that the Markov chain has a unique stationary 
distribution $\pi$ and that it is reversible with respect to 
this measure. 
For a more comprehensive  introduction to the theory of 
Markov chains we refer to \cite{MeTw09,RoRo04}.

To every Markov chain $(X_i)_{i\in\N}$ corresponds a 
\emph{Markov kernel} 
$P\colon\R^d\times\B\to[0,1]$ 
such that for each $x\in\R^d$, 
$P(x,\cdot)$ is a probability measure on 
$\B$ and, for each $A\in\B$, $P(\cdot,A)$ is $\B$-measurable.
This Markov kernel is 
 given by
\[
P(x,A) \,=\, \P(X_{n+1}\in A \mid X_{n}=x),\quad x\in \R^d,\;A\in\B,\:n\in\N
\]
and describes the probability that the Markov chain reaches 
the set $A$ in one step 
from $x$.
Using this Markov kernel we define the \emph{Markov operator} 
$P$ 
(for notational convenience we use the same letter as for the 
Markov kernel) 
by 
\[
Pf(x) \,=\, \int_{\R^d} f(y) \,P(x,\dint y),\quad x\in\R^d
\]
for all functions $f\in L_2=L_2(\pi)$, where $L_2$ is the 
Hilbert space of functions on $f\colon \R^d \to \R$ with inner product
\[
\scalar{f}{g} \,=\, \int_{\R^d} f(x)\, g(x) \,\dint\pi(x).
\]
By reversibility of the Markov chain we know that $P$ 
is a self-adjoint operator on $L_2$.
A self-adjoint operator $P$ is called \emph{positive}, 
written $P\ge0$, if 
\[
\scalar{Pf}{f} \,\ge\, 0, \quad \forall f\in L_2.
\]
It is well known that
positive operators have only non-negative 
spectrum, for further details see for example \cite{Kr89}.

Our aim is to show that several Markov chains that are used 
to sample from distributions in $\R^d$ induce positive 
Markov operators. In this case, we say that the 
Markov chain is positive. 
We will basically utilize the following lemma.
\begin{lemma} \label{lem: pos}
 Let $H_1$ and $H_2$ be Hilbert spaces and $M\colon H_1 \to H_2$ 
 be a bounded, linear operator. Let $M^*$ be the adjoint operator of $M$ 
 and
 let $T\colon H_2 \to H_2 $ be a bounded, linear and positive operator. 
 Then $M T M^* \colon H_1 \to H_1$ is also positive.
\end{lemma}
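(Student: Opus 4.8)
The plan is to verify the defining inequality for positivity directly: I would show that $\scalar{MTM^* f}{f} \ge 0$ for every $f \in H_1$. The only ingredients needed are the defining property of the adjoint $M^*$, which transfers $M$ from one argument of an inner product to the other (and in doing so switches between the inner products of $H_1$ and $H_2$), and the hypothesis $T \ge 0$, which will be invoked exactly once at the very end.

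First I would fix an arbitrary $f \in H_1$ and abbreviate $g := M^* f \in H_2$. The idea is to move the outer factor $M$ off the first slot of the $H_1$-inner product and onto the second slot, where it becomes $M^*$ acting on $f$; this rewrites everything as an $H_2$-inner product built from $T$ and $g$. Concretely, applying the adjoint identity $\scalar{Mu}{f} = \scalar{u}{M^* f}$ with $u = T g \in H_2$ yields
\[
\scalar{MTM^* f}{f} \,=\, \scalar{M(Tg)}{f} \,=\, \scalar{Tg}{g} \,\ge\, 0,
\]
where the final inequality is precisely the positivity of $T$ evaluated at $g \in H_2$. Since $f \in H_1$ was arbitrary, this establishes $MTM^* \ge 0$.

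The argument collapses to this single substitution, so there is no genuine obstacle beyond careful bookkeeping. The one point requiring attention is to keep track of which Hilbert space each inner product lives in and to apply the adjoint identity in the correct direction, so that $M^* f$ indeed lands in $H_2$, the domain of $T$. Boundedness of $M$ and $T$ guarantees that the composition $MTM^*$ is a well-defined bounded linear operator on $H_1$, so no domain or self-adjointness technicalities intervene, and the positivity hypothesis on $T$ is the only nontrivial fact used.
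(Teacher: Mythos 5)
Your proof is correct and follows exactly the same route as the paper's: a single application of the adjoint identity to rewrite $\scalar{MTM^*f}{f}_1$ as $\scalar{TM^*f}{M^*f}_2$, followed by the positivity of $T$. Nothing to add.
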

\begin{proof}
 We denote the inner product of $H_i$ by $\scalar{\cdot}{\cdot}_i$ for $i=1,2$. 
 By the definition of the adjoint operator and positivity of $T$,
  \[
    \scalar{M T M^* f}{f}_1 = \scalar{TM^* f}{M^* f}_2 \geq 0.
  \] 
 This proves the statement.
\end{proof}

Suppose we have an operator $P\colon H_1 \to H_1$ on a Hilbert space 
with the property that it can be written as $P=MTM^*$, 
where  $T\colon H_2 \to H_2$, $M\colon H_1 \to H_2$ and 
$M^*$ is the adjoint of $M$ for some (other) Hilbert space $H_2$.
If we can show, additionally, that $T$ is a positive operator, 
we obtain by the lemma above that $P$ is also positive.
Thus, the proof of positivity of the Markov chains under 
consideration is done by a construction of a suitable second 
Hilbert space such that the corresponding Markov operator can 
be written in the above mentioned form.

\section{Applications}

Throughout this section we consider Markov chains 
in a subset $K$ of $\R^d$ with non-empty interior.
Additionally, we denote by $B_d$ the $d$-dimensional unit ball 
and by $S^{d-1}$ its boundary.
Let $\rho \colon K \to [0,\infty)$ be 
a (not necessarily normalized) density, i.e.~a non-negative 
Lebesgue-integrable function. 
We define the measure with density $\rho$ by
\[
\pi(A)=\frac{\int_A \rho(x)\;\dint x}{ \int_K \rho(x)\;\dint x}
\]
for all measurable sets $A\subset K$.
For example, if $\rho(x)=\mathbf{1}_K(x)$ then $\pi$ is simply 
the uniform distribution on $K$. 
In what follows we present some Markov chains that can be used 
sample approximately from $\pi$, that is, $\pi$ is their stationary 
distribution. 
We will see that each of them is positive, independent from the 
choice of the density $\rho$.

We will define only the Markov operators 
for the corresponding Markov chains, since 
the corresponding Markov kernel can be obtained by 
applying the operators to indicator functions.

\subsection{Hit-and-run}
\label{subsec: har}

The hit-and-run algorithm 
consists of two steps: Starting from $x\in K$, choose 
a random direction $\theta\in S^{d-1}$ and then choose the next 
state of the Markov chain with respect to the density $\rho$ 
restricted to the chord determined by $x$ and $\theta$.

For $x\in K$ and $\theta\in S^{d-1}$ 
we denote by $L(x,\theta)$ the \emph{chord} in $K$ 
through $x$ and $x+\theta$, i.e.
\[
  L(x,\theta) = \set{x+s\theta \in K \mid s\in \R}.
\]
Additionally we write $\kappa_d$ for the volume of the 
$(d-1)$-dimensional unit sphere and 
\begin{equation} \label{eq:l}
 \ell(x,\theta) = \int_{L(x,\theta)}  \rho(y)\, \dint y
\end{equation}
for the total weigth of the chord $L(x,\theta)$.
The Markov operator $H$ that corresponds to the hit-and-run chain 
is defined by 
\[
Hf(x) 
\,=\, \frac{1}{\kappa_d} \int_{S^{d-1}} 
	\frac{1}{\ell(x,\theta)} \int_{L(x,\theta)}
	f(y)\, \rho(y)\; \dint y\, \dint\theta
\]
for all $f\in L_2(\pi)$.
To rewrite $H$ in the desired form 
let $\mu$ be the product measure of $\pi$ and the uniform 
distribution on $S^{d-1}$ 
and $L_2(\mu)$ be the Hilbert space of 
functions $g:K\times S^{d-1}\to\R$ with inner-product
\[
  \scalar{g_1}{g_2}_\mu =\frac{1}{\kappa_d} \int_K \int_{S^{d-1}} 
  g_1(x,\theta)\, g_2(x,\theta)\; \dint \theta\,\dint\pi(x)
\]
for $g_1,g_2\in L_2(\mu)$.
We define the operators $M\colon L_2(\mu)\to L_2(\pi)$ 
and $T\colon L_2(\mu) \to L_2(\mu)$ by
\[
Mg(x) \,=\, \frac{1}{\kappa_d} \int_{S^{d-1}} 
	g(x,\theta)\, \dint\theta
\]
and
\[
Tg(x,\theta) \,=\, \frac{1}{\ell(x,\theta)}\int_{L(x,\theta)}
	g(y,\theta)\, \rho(y)\, \dint y.
\]
Recall that the adjoint operator of $M$ is the unique operator $M^*$ 
that satisfies $\scalar{f}{Mg}=\scalar{M^*f}{g}_\mu$ for all 
$f\in L_2(\pi)$, $g\in L_2(\mu)$, see~\cite[Thm.~3.9-2]{Kr89}.
Since
\[
\scalar{f}{Mg} \,=\, \frac{1}{\kappa_d} \int_K \int_{S^{d-1}} 
	f(x)\, g(x,\theta)\, \dint\theta\, \dint\pi(x),
\]
we obtain that, for all $\theta\in S^{d-1}$ and $x\in K$,
\[
M^*f(x,\theta) \,=\, f(x).
\]
This implies 
\[
MTM^*f(x) \,=\, \frac{1}{\kappa_d} \int_{S^{d-1}} 
	\frac{1}{\ell(x,\theta)}\int_{L(x,\theta)} 
	f(y)\,\rho(y)\; \dint y\, \dint\theta 
\,=\, Hf(x)
\]
and thus, that $M$ and $T$ are the desired ``building blocks'' 
for Lemma~\ref{lem: pos}.
First of all, note that by Fubini's Theorem the operator 
$T$ is self-adjoint in $L_2(\mu)$.
It remains to show that $T$ is positive.
We know that $L(x,\theta)=L(y,\theta)$ for all 
$y\in L(x,\theta)$. 
It follows that 
\[\begin{split}
T^2g(x,\theta) 
\,&=\, \frac{1}{\ell(x,\theta)}\int_{L(x,\theta)}
	Tg(y,\theta)\,\rho(y)\; \dint y \\
\,&=\, \frac{1}{\ell(x,\theta)}\int_{L(x,\theta)}
	\frac{1}{\ell(y,\theta)}\int_{L(y,\theta)}
	g(z,\theta)\,\rho(z)\, \dint z\;\rho(y)\, \dint y \\
&=\, \frac{1}{\ell(x,\theta)^2}
	\int_{L(x,\theta)}
	g(z,\theta)\,\rho(z)\,\dint z\;\int_{L(x,\theta)}\rho(y)\,\dint y\\
&=\, Tg(x,\theta).
\end{split}\]
Thus, $T$ is a self-adjoint and idempotent operator on 
$L_2(\mu)$, which implies that $T$ is a projection and, 
in particular, that it is positive, see e.g.~\cite[Thm.~9.5-2]{Kr89}.
Finally, Lemma~\ref{lem: pos} shows that $H$ is positive.

\subsection{Gibbs sampler}
The Gibbs sampler, or specifically the random scan Gibbs sampler, 
is conceptually very similar to the hit-and-run algorithm. 
In each step, we choose a direction and sample
with respect to $\rho$ restricted to the chord in this direction.
But, in contrast to the hit-and-run, 
we choose the direction from the $d$ possible directions 
of the coordinate axes.

Let $e_1=(1, 0, \dots ,0), \dots , e_d=(0,\dots,0,1)$ be 
the Euclidean standard basis in $\R^d$ 
and $\ell(\cdot,\cdot)$ be from \eqref{eq:l}.
The Markov operator $G$ of the Gibbs sampler is given by
\[
  Gf(x) = \frac{1}{d} \,\sum_{j=1}^d \frac{1}{\ell(x,e_j)} 
  	\int_{L(x,e_j)} f(y)\,\rho(y)\; \dint y, 
\]
for all $f\in L_2(\pi)$.
We follow almost the same lines as for the hit-and-run chain. 
Let $m$ be the product measure of $\pi$ and the uniform 
distribution on $[d]=\set{1,\dots,d}$. 
By $L_2(m)$ 
we denote the Hilbert space of functions 
$g\colon K \times [d] \to \R$ equipped with the inner product
\[
  \scalar{g_1}{g_2}_m = \frac{1}{d} \sum_{j=1}^d 
  	\int_K g_1(x,j)\, g_2(x,j)\, \dint \pi(x)
\]
for $g_1,g_2\in L_2(m)$.
We define the operators $M\colon L_2(m) \to L_2(\pi)$ and 
$T\colon L_2(m)\to L_2(m)$
by
\[
  Mg(x) = \frac{1}{d} \sum_{j=1}^d g(x,j)% \quad g\in L_2(m),
\]
and 
\[
  Tg(x,j)=\frac{1}{\ell(x,e_j)} 
  	\int_{L(x,e_j)} g(y,j)\, \rho(y)\, \dint y.
\]
By the same calculations as in Subsection~\ref{subsec: har} 
we obtain for all $f\in L_2(\pi)$, $x\in K$ and $j\in[d]$, that
$M^*f(x,j) = f(x)$  
and that $G=MTM^*$. 
It is easily seen that $T$ is self-adjoint and idempotent.
Hence, $T$ is a projection and thus, positive, which 
proves the assertion by Lemma~\ref{lem: pos}.

\subsection{Slice sampler}
\label{subsec: gss}
For any $t > 0$ assume that $R_t$ is the transition kernel of a Markov chain on the 
level set $K(t)$ of $\rho$, i.e.
\[
K(t)=\set{x\in K\mid \rho(x) \geq t}.
\]
Note that $\vol_d(K(t))< \infty$ for each $t> 0$ by the integrability of $\rho$. 
Further let $R_t$ be reversible with respect to $U_t$, the uniform distribution on $K(t)$, i.e.
\[
U_t(A) = \frac{\vol_d(A\cap K(t))}{\vol_d(K(t))}, \quad A\subset K,\;t>0,
\]
where $\vol_d$ denotes the $d$-dimensional Lebesgue measure.
Note that if $K=K(0)$ is bounded, then also $U_0$ is well-defined and denotes the uniform
distribution in $K$. 
The slice sampler, starting from a state $x$ works as follows: 
First choose a level $t$ uniformly distributed in $(0,\rho(x)]$ and then sample 
the next state with respect to $R_t(x,\cdot)$ in the level set $K(t)$.
If $R_t(x,\cdot) = U_t(\cdot)$ then the slice sampler is called simple slice sampler \cite{MiTi02}.
The corresponding Markov operator is defined by
\[
R f(x) \,=\, \frac{1}{\rho(x)} \int_0^{\rho(x)} R_t f(y)\, \dint t
\,=\, \frac{1}{\rho(x)} \int_0^{\rho(x)} \int_{K(t)} f(y)\, R_t(x,\dint y)\, \dint t,
\]
for all $f\in L_2(\pi)$.
For any $t> 0$ we assume that $R_t$ is a positive operator on $L_2(U_t)$, 
which is the set of all square integrable real functions with respect $U_t$ 
on $K(t)$, i.e. 
\[
  \scalar{R_t f}{f}_{U_t} 
  \,=\, \int_{K(t)} R_t f(x)\,f(x)\; U_t(\dint x) \,\geq\, 0.
\]
To show that $R$ is positive, let 
\[
 K_\rho = \set{ (x,t)\mid x\in K,\; t\in(0,\rho(x)] } \subset \R^{d+1}
\]
and let $\mu$ be the uniform distribution in $K_\rho$. 
Let $L_{2}(\mu)$ be the Hilbert space of functions $g\colon K_\rho \to \R$ 
with inner product 
\[
  \scalar{g_1}{g_2}_\mu = \int_{K_\rho} g_1(x,t)\, g_2(x,t)\;\dint\mu(x,t)
\]
for $g_1,g_2 \in L_2(\mu)$.
We define the operators $M \colon L_{2}(\mu) \to L_{2}(\pi)$ 
and $T \colon L_{2}(\mu) \to L_{2}(\mu)$ by
\[
Mg(x) = \frac{1}{\rho(x)} \int_0^{\rho(x)} g(x,t)\;\dint t.
\]
and
\[
  Tg(x,t) = \int_{K(t)} g(y,t)\; R_t(x,\dint y)
\]
for $g\in L_2(\mu)$. The adjoint operator $M^* \colon L_{2}(\pi) \to L_{2}(\mu)$ is
$M^*f(x,t) = f(x)$.
The operator $T$ is self-adjoint, since $R_t$ is reversible with respect to $U_t$.
For the positivity define $g_t(x)=g(x,t)$, $(x,t)\in K_\rho$. 
We have
\begin{align*}
  \scalar{Tf}{f}_\mu \,=\, \int_0^\infty \scalar{R_t g_t}{g_t)}_{U_t}\; 
	\frac{\vol_d(K(t))}{\vol_{d+1}(K_\rho)} \,\dint t,
\end{align*}
which implies positivity of $T$ by the positivity of $R_t$.
By the fact that $R=M T M^*$ and by Lemma~\ref{lem: pos} it is proven that $R$ is positive. 

\subsection{Metropolis algorithm}
For simplicity we additionally assume that $K\subset \R^d$ is bounded. With
some extra work one could avoid this assumption.
Let $B$ be a positive proposal kernel which is reversible 
with respect to $U_0$, the uniform distribution in $K$.
Then the Markov operator of the Metropolis algorithm is given by
\[
 Mf(x)= \int_K f(y) \a(x,y) B(x,\dint y) + \left(1-\int_K \a(x,y) B(x,\dint y)\right) f(x)
\]
where $\a(x,y)=1 \wedge \frac{\rho(y)}{\rho(x)}$ and $f\in L_2(\pi)$.
We interpret the Metropolis algorithm as a slice sampler.
For $t\geq0 $, $x\in K(t)$ and $A\subset K$
define 
\[
 R_t(x,A)= B\bigl(x,A\cap K(t)\bigr) + 
	\left(1-B\bigl(x,K(t)\bigr) \right) \mathbf{1}_A(x).
\]
Recall that 
\[
R f(x) = \frac{1}{\rho(x)} \int_0^{\rho(x)} \int_{K(t)} f(y)\; R_t(x,\dint y)\; \dint t,
\]
is the Markov operator of the slice sampler and that 
$U_t$ is the uniform distribution in $K(t)$.

\goodbreak

\begin{lemma}
\begin{enumerate}
  \item\label{en: self_adj}
  If $B$ is reversible with respect to $U_0$, then $R_t$ is reversible with respect to $U_t$ for any $t\geq0$.
 \item\label{en: pos}
  If $B$ is positive on $L_2(U_0)$, then $R_t$ is positive on $L_2(U_t)$ for any $t\geq0$.
 \item\label{en: metro_slice} 
  The general slice sampler and the Metropolis algorithm coincide, i.e. $Rf = Mf$ for $f\in L_2(\pi)$.
\end{enumerate}
\end{lemma}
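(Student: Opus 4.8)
The plan is to treat the three parts in turn, observing that the first two rest on the single fact that $U_0$ and $U_t$ are uniform, hence have constant Lebesgue densities $1/\vol_d(K)$ and $1/\vol_d(K(t))$ on $K$ and $K(t)$. For part~\ref{en: self_adj} I would first recall that reversibility with respect to a uniform measure is the same as symmetry of the kernel as a measure: reversibility of $B$ w.r.t.\ $U_0$ means $\int_A B(x,B')\,\dint x = \int_{B'} B(x,A)\,\dint x$ for all $A,B'\subset K$. I then split $R_t$ into its off-diagonal part $B(x,\cdot\cap K(t))$ and its diagonal holding part $(1-B(x,K(t)))\mathbf{1}_{(\cdot)}(x)$. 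For $A,B'\subset K(t)$ the diagonal contribution reduces on both sides to $\int_{A\cap B'}(1-B(x,K(t)))\,\dint x$ and is therefore automatically symmetric, while the off-diagonal contribution inherits the symmetry of $B$ restricted to $K(t)\subset K$. This yields reversibility of $R_t$ w.r.t.\ $U_t$.

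For part~\ref{en: pos} I would write, for $f\in L_2(U_t)$,
\[
\scalar{R_t f}{f}_{U_t} = \frac{1}{\vol_d(K(t))}\left[\int_{K(t)}\int_{K(t)} f(x)f(y)\,B(x,\dint y)\,\dint x + \int_{K(t)}(1-B(x,K(t)))\,f(x)^2\,\dint x\right].
\]
The second term is non-negative because $B$ is a probability kernel and $K(t)\subset K$, so $B(x,K(t))\le 1$. For the first term I would extend $f$ by zero to a function $\tilde f$ on $K$; since $\tilde f$ vanishes outside $K(t)$, the double integral equals $\vol_d(K)\,\scalar{B\tilde f}{\tilde f}_{U_0}$, which is non-negative by positivity of $B$ on $L_2(U_0)$. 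Hence $\scalar{R_t f}{f}_{U_t}\ge 0$.

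The heart of the lemma is part~\ref{en: metro_slice}. Fixing $x\in K$ with $\rho(x)>0$, for every $t\in(0,\rho(x)]$ we have $x\in K(t)$, so $R_t(x,\cdot)$ is defined and $\int_{K(t)} f(y)\,R_t(x,\dint y) = \int_{K(t)} f(y)\,B(x,\dint y) + (1-B(x,K(t)))f(x)$. Inserting this into $Rf(x)$ and interchanging the $t$- and $y$-integrals, the key identity is
\[
\frac{1}{\rho(x)}\int_0^{\rho(x)} \mathbf{1}_{K(t)}(y)\,\dint t = \frac{\min\set{\rho(x),\rho(y)}}{\rho(x)} = 1\wedge\frac{\rho(y)}{\rho(x)} = \a(x,y),
\]
since $\mathbf{1}_{K(t)}(y)=\mathbf{1}_{\set{t\le\rho(y)}}$. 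Applied to the off-diagonal term this produces $\int_K f(y)\,\a(x,y)\,B(x,\dint y)$, and applied to $B(x,K(t))=\int_K \mathbf{1}_{K(t)}(y)\,B(x,\dint y)$ inside the diagonal term it produces $f(x)\bigl(1-\int_K \a(x,y)\,B(x,\dint y)\bigr)$. Summing the two gives exactly $Mf(x)$.

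I expect the main obstacle to be the bookkeeping in part~\ref{en: metro_slice}: justifying the Fubini interchange between the $t$-integral and the $B(x,\dint y)$-integral, and tracking the domains so that the layer-cake identity above applies cleanly. Parts~\ref{en: self_adj} and~\ref{en: pos} should then be routine once the uniform-density observation and the split of $R_t$ into diagonal and off-diagonal parts are in place.
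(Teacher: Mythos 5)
Your proposal is correct and follows essentially the same route as the paper: the same split of $R_t$ into its $B$-part restricted to $K(t)$ and its diagonal holding part (with the zero-extension $\tilde f=\mathbf{1}_{K(t)}f$ matching the paper's $\scalar{B(\mathbf{1}_{K(t)}f)}{\mathbf{1}_{K(t)}f}_{U_0}$ term), and the same layer-cake identity $\frac{1}{\rho(x)}\int_0^{\rho(x)}\mathbf{1}_{K(t)}(y)\,\dint t=\a(x,y)$ for part three. The only cosmetic difference is that you phrase reversibility via symmetry of the kernel on sets rather than via inner products of functions, which is equivalent.
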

\begin{proof} 
We have for any $f,g \in L_2(U_t)$ that
\begin{align*}
 \scalar{R_t f}{g}_{U_t} 
& = \int_{K(t)} \int_{K(t)} f(y) g(x) \mathbf{1}_{K(t)}(y)\,B(x,\dint y)\; U_t(\dint x)\\
&\qquad + \int_{K(t)} (1-B(x,K(t))) f(x)g(x)\; U_t({\dint x})\\
& = \scalar{B(\mathbf{1}_{K(t)}f)}{\mathbf{1}_{K(t)}g}_{U_0} \, \frac{\vol_d(K)}{\vol_d(K(t))}\\
&\qquad + \int_{K(t)} (1-B(x,K(t))) f(x)g(x)\; U_t({\dint x}).
\end{align*}  
By using the self-adjointness of $B$ on $L_2(U_0)$ we obtain that $R_t$ is self-adjoint on $L_2(U_t)$
for any $t\geq0$, which proves \eqref{en: self_adj}. Assertion \eqref{en: pos} follows by similar
arguments. One obtains
\begin{align*}
 \scalar{R_t f}{f}_{U_t} 
& = \scalar{B(\mathbf{1}_{K(t)}f)}{\mathbf{1}_{K(t)}f}_{U_0} \, \frac{\vol_d(K)}{\vol_d(K(t))}\\
&\qquad + \int_{K(t)} (1-B(x,K(t))) f(x)^2\; U_t({\dint x}),
\end{align*}
which, by using the positivity of $B$, proves the positivity of $R_t$. 
Note that the Markov operator 
of the slice sampler can be written as
\begin{align*}
 Rf(x) & = \frac{1}{\rho(x)} \int_0^{\rho(x)} \int_{K(t)} f(y)\, B(x,\dint y)\, \dint t \\
& \qquad + \frac{f(x)}{\rho(x)} \int_0^{\rho(x)} \mathbf{1}_{K(t)}(x)\, (1-B(x,K(t)))\, \dint t \\
& = \frac{1}{\rho(x)}\int_{K } 
   \int_0^{\infty}  \mathbf{1}_{K(t)}(x) \mathbf{1}_{K(t)}(y)\,\dint t\,  f(y)\, B(x,\dint y)  \\
& \qquad + f(x) 
\left(1-\frac{1}{\rho(x)} \int_K \int_0^{\infty} \mathbf{1}_{K(t)}(x) 
\mathbf{1}_{K(t)}(y)\, \dint t \, B(x,\dint y) \right).
\end{align*}
Then \eqref{en: metro_slice} follows by 
\[
\frac{1}{\rho(x)} \int_0^\infty  \mathbf{1}_{K(t)}(x) \mathbf{1}_{K(t)}(y)\, \dint t \,=\, \a(x,y).
 \]

\end{proof}

Note that, by the previous lemma all assumptions of Subsection~\ref{subsec: gss} 
are satisfied if we additionally assume that $B$ is positive.
Hence, the Metropolis algorithm defines 
a positive Markov operator if the proposal is positive.

%%%%%%%%%%%%%%%%%%%%%%%%%%%%%%%%%%%%%%%%%%%%%%%%%%%%%%%%%%%%%%%%%%%
%%                                                               %%
%% Use the two commands below for producing your bibliography    %%
%% with bibtex, then comment again the commands and include the  %%
%% content of the .bbl file in this file below the commands.     %%
%%                                                               %%
%%%%%%%%%%%%%%%%%%%%%%%%%%%%%%%%%%%%%%%%%%%%%%%%%%%%%%%%%%%%%%%%%%%

%\bibliographystyle{amsplain}
%\bibliography{lit}

% add below the content of your .bbl file produced by bibtex.

\providecommand{\bysame}{\leavevmode\hbox to3em{\hrulefill}\thinspace}
\providecommand{\MR}{\relax\ifhmode\unskip\space\fi MR }
% \MRhref is called by the amsart/book/proc definition of \MR.
\providecommand{\MRhref}[2]{%
  \href{http://www.ams.org/mathscinet-getitem?mr=#1}{#2}
}
\providecommand{\href}[2]{#2}

%%%%%%%%%%%%%%%%%%%%%%%%%%%%%%%%%%%%%%%%%%%%%%%%%%%%%%%%%%%%%%%%%%%
%%                                                               %%
%% You may add acknowledgments (optional).                       %%
%%                                                               %%
%%%%%%%%%%%%%%%%%%%%%%%%%%%%%%%%%%%%%%%%%%%%%%%%%%%%%%%%%%%%%%%%%%%

\ACKNO{The second author wants to thank F. Martinelli and 
P. Caputo for their kind hospitality while he was
visiting Universit\'a Roma Tre, Italy. 
Additionally, we want to thank K. \L atuszy\'{n}ski and the 
referees for helpful comments.}

%%%%%%%%%%%%%%%%%%%%%%%%%%%%%%%%%%%%%%%%%%%%%%%%%%%%%%%%%%%%%%%%%%%
%%                                                               %%
%% You have reached the end of your document.                    %%
%%                                                               %%
%%%%%%%%%%%%%%%%%%%%%%%%%%%%%%%%%%%%%%%%%%%%%%%%%%%%%%%%%%%%%%%%%%%

\end{document}